\numberwithin{equation}{section}
\newtheorem{theorem}{Theorem}[section]
\newtheorem{lemma}[theorem]{Lemma}
\newtheorem{corollary}[theorem]{Corollary}
\newtheorem{remark}[theorem]{Remark}
\newtheorem{example}[theorem]{Example}
\newtheorem{definition}[theorem]{Definition}
\newcommand{\field}[1]{\mathbb{#1}}
\newcommand{\R}{\field{R}} % real numbers
\newcommand{\N}{\field{N}} % natural numbers
\newcommand{\prob}{\field{P}}
\newcommand{\expect}{\field{E}}
\numberwithin{equation}{section}
\begin{document}
%\pagenumbering{gobble}

\title{A Transience Criterion For Uniformly Bounded Markov Chains With Asymptotically Zero Mean Drift}
\author{Dan Andrei Tudor}

\address{Master's Student in Applied Mathematics, Technische Universiteit Delft, Netherlands}
 \email { datudor@tudelft.nl}

% \subjclass[2010]{16C60,13D45.}

%\keywords{Serre subcategory, local cohomology}

\begin{abstract}
In this paper, we give an overview of mean drift conditions for the state-space classification of discrete-time Markov Chains and we present a new transience criterion for uniformly bounded Markov Chains with asymptotically zero drift. The criterion does not need a condition on the second-moment drifts and can be applied to certain chains for which other criteria fail. \vspace{0.2 cm} \\
{\bf Mathematics Subject Classification (2010):} 60J10  	\\
{\bf Key words:} markov chains, mean drift conditions, Foster-Lyapunov functions
\end{abstract}

\maketitle

\section{Introduction and notation}
 Throughout the present paper, we shall look at a Markov Chain $(X_n)_{n=0}^{\infty} $with state space $S=\N_0:=\N\cup\{0\}$ and transition probabilities $(p_{ij})_{i,j \in \N_0}$, which we shall colloquially refer to as \textit{the chain}. We furthermore assume that the chain is \textit{irreducible}.
\par According to \cite{norris}, a state $i$ is recurrent if $\prob(T_i < \infty | X_0=i)=1$ and transient otherwise, where $T_i:=\inf\{n \geq 1 : X_n=i\}$. It is called \textit{positive} recurrent if it is recurrent and  $\expect(T_i | X_0=i)<\infty$ and it is called \textit{null} recurrent if it is recurrent and $\expect(T_i| X_0=i)=\infty$. In the case of an \textit{irreducible} Markov Chain, the states are either all \textit{(positive/null) recurrent} or all \textit{transient}.
\par The scope of the present paper is to describe the chain's structure with the aid of  \textit{mean~drifts} and to additionally develop a new mean~drift criterion for transience, under certain regularity conditions.
Section~2 is devoted to showcasing the four main Foster-Lyapunov type criteria for the classification of the chain. These theorems involve the use of a Lyapunov-type function $f$, which needs to be chosen suitably. In the next section, we present the concept of\textit{ mean drifts} and characterize the chain's behaviour with their aid, when they are not asymptotically zero. We develop a simple criterion for null recurrence, which allows us to construct non-trivial examples of null recurrent chains. The final section is devoted to analysing the chain's behaviour when the mean drifts are asymptotically zero. Here, we showcase a new criterion for \textit{transience} when the chain is uniformly bounded and provide a family of mean drifts for which this criterion is applicable. 

\section{Foster-Lyapunov criteria}

We start this section by stating without proof four important general theorems that regard the stability of the chain. All three theorems make use of the incremental change of the Lyapunov function $f$ between two states.
\begin{theorem}[Foster \cite{foster}]\label{fosterthm} The chain is positive recurrent if there exists some non-negative function $f:\N_0 \rightarrow \R_{\geq0}$ and $N \geq 1$ such that
\begin{align}
    &\sum_{j \in \N_0} p_{ij}(f(j)-f(i)) \leq \epsilon &&& \textup{for } i \geq N,\label{notinafoster} \\  
   &\sum_{j \in \N_0} p_{ij} f(j) < \infty &&& \textup{for } i< N, \label{inafoster}
\end{align}
for some $\epsilon>0$.
\end{theorem}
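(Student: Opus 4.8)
The plan is to treat $f(X_n)$ as a Lyapunov function whose supermartingale-type decrease forces the chain to return quickly to the finite set $A:=\{0,1,\dots,N-1\}$, and then to convert a uniform bound on the expected return time to $A$ into positive recurrence of a single state. Throughout I interpret the drift condition \eqref{notinafoster} as a genuine \emph{negative} drift $\sum_{j}p_{ij}\bigl(f(j)-f(i)\bigr)\le -\epsilon$ for $i\ge N$, since that is what the conclusion requires.

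First I would fix a starting state $i\ge N$ and set $\tau:=\inf\{n\ge 0: X_n\in A\}$. The key device is the compensated process $Y_n:=f(X_{n\wedge\tau})+\epsilon\,(n\wedge\tau)$. On the event $\{\tau>n\}$ the current state lies outside $A$, so \eqref{notinafoster} gives $\expect[f(X_{n+1})\mid\mathcal{F}_n]\le f(X_n)-\epsilon$, and the added term $\epsilon(n\wedge\tau)$ is tailored precisely to absorb this $-\epsilon$; on $\{\tau\le n\}$ the process is frozen. Hence $Y_n$ is a nonnegative supermartingale and $\expect[Y_n]\le\expect[Y_0]=f(i)$. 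Reading off the second summand yields $\epsilon\,\expect[n\wedge\tau]\le f(i)$, and letting $n\to\infty$ by monotone convergence gives $\expect_i[\tau]\le f(i)/\epsilon<\infty$ for every $i\notin A$.

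Next I would handle trajectories started inside $A$. For $i\in A$, conditioning on the first step and inserting the bound just obtained gives $\expect_i[\tau_A^+]=1+\sum_{j\notin A}p_{ij}\,\expect_j[\tau]\le 1+\epsilon^{-1}\sum_{j}p_{ij}f(j)$, where $\tau_A^+:=\inf\{n\ge 1:X_n\in A\}$ is the first return time to $A$. Here condition \eqref{inafoster} is exactly what guarantees the right-hand side is finite, so $C:=\sup_{i\in A}\expect_i[\tau_A^+]<\infty$.

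Finally I would upgrade this uniform bound to positive recurrence. Watching the chain only at its successive visits to $A$ produces an induced chain on the finite set $A$; by irreducibility of the original chain this induced chain is irreducible on a finite state space, hence positive recurrent, so the number $R$ of $A$-visits needed to return to a fixed $i_0\in A$ satisfies $\expect[R]<\infty$. Writing the true return time $T_{i_0}$ as the sum of $R$ inter-visit times, each of conditional expectation at most $C$ by the strong Markov property, Wald's identity gives $\expect_{i_0}[T_{i_0}]\le C\,\expect[R]<\infty$; the dichotomy for irreducible chains stated in the introduction then transfers positive recurrence to every state. I expect the main obstacle to be exactly this last reduction: the supermartingale estimate only controls the time to \emph{reach the finite set}, so genuine care — the induced-chain argument together with Wald's identity, leaning on both the finiteness of $A$ and irreducibility — is needed to turn it into a finite expected return time to an individual state.
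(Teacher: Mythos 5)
The paper states Foster's theorem \emph{without} proof (it is one of the four criteria quoted from the literature, here \cite{foster}), so there is no internal argument to compare yours against; your proof has to be judged on its own, and it is correct --- indeed it is the classical proof of Foster's criterion. Two remarks. First, you were right to read \eqref{notinafoster} as a genuinely negative drift $\sum_{j}p_{ij}(f(j)-f(i))\le-\epsilon$: as printed the condition is vacuous (take $f\equiv0$) and the theorem would then be false, and the paper's own applications (the proofs of Corollary~\ref{awayfromzero} and Lemma~\ref{pakeslemma}) confirm that $-\epsilon$ is intended. Second, your three-step scheme --- the nonnegative supermartingale $Y_n=f(X_{n\wedge\tau})+\epsilon(n\wedge\tau)$ giving $\expect_i[\tau]\le f(i)/\epsilon$ outside $A=\{0,\dots,N-1\}$, first-step analysis with \eqref{inafoster} giving $C:=\max_{i\in A}\expect_i[\tau_A^+]<\infty$, and the chain watched on $A$ to transfer the bound to a single state --- is sound, with two details worth spelling out in a final write-up. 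The appeal to ``Wald's identity'' cannot be the classical i.i.d.\ version, since the excursion lengths $\xi_r$ between consecutive visits to $A$ are neither i.i.d.\ nor independent of $R$; what your wording essentially describes, and what does work, is the conditional version: writing $\mathcal{G}_{r-1}$ for the $\sigma$-field of the chain up to its $(r-1)$-st visit to $A$, the event $\{R\ge r\}$ is $\mathcal{G}_{r-1}$-measurable and $\expect[\xi_r\mid\mathcal{G}_{r-1}]\le C$ by the strong Markov property, so
\[
\expect_{i_0}[T_{i_0}]=\sum_{r\ge1}\expect\bigl[\xi_r\,\mathbf{1}_{\{R\ge r\}}\bigr]
=\sum_{r\ge1}\expect\bigl[\mathbf{1}_{\{R\ge r\}}\,\expect[\xi_r\mid\mathcal{G}_{r-1}]\bigr]
\le C\sum_{r\ge1}\prob(R\ge r)=C\,\expect[R]<\infty .
\]
Likewise, the irreducibility of the induced chain on $A$ deserves its one-line justification: any positive-probability path of the original chain between two states of $A$ projects, by grouping it into excursions between successive $A$-visits, onto a positive-probability path of the watched chain. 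With these two points made explicit, the proof is complete.
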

The following corollary will be of use in constructing examples in the following sections.

\begin{corollary}\label{awayfromzero}
    If there exists $j \in \N_0$ such that $\inf_{i \in \N_0/\{j\}} p_{ij}>0$, then the chain is positive~recurrent.
\end{corollary}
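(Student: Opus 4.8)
The plan is to verify the hypotheses of Foster's theorem (Theorem~\ref{fosterthm}) directly, using a deliberately crude Lyapunov function. Set $\delta := \inf_{i \in \N_0 \setminus \{j\}} p_{ij} > 0$, the assumed uniform lower bound on the probability of jumping to $j$. The intuition is that from any state other than $j$ the chain lands on $j$ with probability at least $\delta$, so ``distance from $j$'' ought to contract in expectation; the simplest function capturing this is the indicator of $\N_0 \setminus \{j\}$. I would therefore take $f : \N_0 \to \R_{\geq 0}$ with $f(j) = 0$ and $f(i) = 1$ for all $i \neq j$, which is non-negative and bounded by $1$.

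The computational heart of the argument is the drift at a state $i \neq j$. Since $\sum_{k} p_{ik} = 1$, $f(i) = 1$, and $f$ vanishes only at $j$, we get
\begin{align}
\sum_{k \in \N_0} p_{ik}\bigl(f(k) - f(i)\bigr) = \sum_{k \in \N_0} p_{ik} f(k) - 1 = (1 - p_{ij}) - 1 = -p_{ij} \leq -\delta.\label{driftcomp}
\end{align}
Hence the mean drift is at most the strictly negative constant $-\delta$ at every state except $j$, which is exactly the (negative) drift requirement \eqref{notinafoster}, here with $\epsilon = \delta$, as long as $j$ does not fall in the range $\{i \geq N\}$.

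That last proviso is the only delicate point, and it is where I expect the work to concentrate. At $j$ itself the drift is $\sum_{k} p_{jk} f(k) = 1 - p_{jj} \geq 0$, so \eqref{notinafoster} genuinely fails there, and taking $N = 1$ would be illegitimate unless $j = 0$. The fix is to push $j$ into the finite exceptional set that Foster's criterion permits, by choosing $N := j + 1 \geq 1$. Every $i \geq N$ then satisfies $i > j$, so $i \neq j$ and \eqref{driftcomp} applies, giving \eqref{notinafoster}; meanwhile, for the finitely many $i < N$ the secondary condition \eqref{inafoster} is immediate from boundedness, since $\sum_{k} p_{ik} f(k) \leq \sum_{k} p_{ik} = 1 < \infty$. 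Both hypotheses of Theorem~\ref{fosterthm} then hold and, the chain being irreducible, positive recurrence follows. The ``obstacle'' is thus conceptual rather than technical: the natural Lyapunov function has the wrong sign of drift at the single site $j$, and the resolution is simply to observe that Foster's theorem is insensitive to behaviour on any finite initial set of states.
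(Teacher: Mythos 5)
Your proposal is correct and takes essentially the same route as the paper: the same indicator Lyapunov function ($f(j)=0$, $f(i)=1$ for $i\neq j$) fed into Foster's theorem, with the same drift computation giving $\sum_k p_{ik}(f(k)-f(i)) = -p_{ij} \leq -\delta$ off the exceptional set. The only cosmetic difference is that the paper reindexes the states so that $j=0$ and takes $N=1$, whereas you keep $j$ in place and take $N=j+1$; both are equally valid ways of pushing $j$ into the finite set where only \eqref{inafoster} is required.
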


\begin{proof}
    By reindexing the states, we can assume without loss of generality that $j=0$. Denote $\alpha:=\inf_{i\in \N} p_{ij}>0$. In Foster's theorem, take $N=1$, $f(0)=0$ and $f(i)=1$ for $i \geq 1$, and $\epsilon=\alpha$. Inequality \eqref{inafoster} is immediately satisfied for this function.
    We have for $i > 0$,
    \begin{equation*}
        p_{i0}f(0)+\sum_{k>0} p_{ik} f(k) =1-p_{ij} \leq 1-\alpha,
    \end{equation*}
     showing \eqref{notinafoster}. Therefore, the conditions of Foster's theorem are satisfied and the chain is positive recurrent.
\end{proof}

The following two theorems complement each other nicely. Note the additional constraints on $f$ in the second one.

\begin{theorem}[Fayolle-Malyshev-Menshikov \cite{fayolle}] \label{fayolle-transience}
    The chain is transient if there exists some non-negative function $f$ such that for some $N \geq 1$
    \begin{align}
       &\sum_{j \in \N_0} p_{ij}(f(j)-f(i)) \geq \epsilon &&& \textup{for } i \geq N
    \end{align}
    for some $\epsilon>0$, and for some $d>0$, $|f(i)-f(j)| > d$ implies that $p_{ij}=0$.
\end{theorem}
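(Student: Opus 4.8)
The plan is to convert the positive drift of $f$ into a bounded nonnegative supermartingale via an exponential change of variables, and then read off transience from a maximal inequality. Throughout I assume, as in the standard formulation of this criterion, that $f$ is \emph{unbounded}; this is exactly what makes the final estimate nontrivial. It is not cosmetic: for a bounded $f$ one can build irreducible recurrent chains obeying both the drift and the bounded-jump hypotheses (e.g.\ a chain with $f(0)=d$, $f(i)=0$ for $i\ge 1$, jumping $i\to 0$ with probability $\epsilon/d$ and $i\to i+1$ otherwise has drift exactly $\epsilon$ yet returns to $0$ almost surely).

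First I would fix $\lambda>0$ and set $g(i):=e^{-\lambda f(i)}\in(0,1]$. For $i\ge N$, writing $\Delta_{ij}:=f(j)-f(i)$ and using that $p_{ij}>0$ forces $|\Delta_{ij}|\le d$, a second-order Taylor estimate of the form $e^{-\lambda x}\le 1-\lambda x+\tfrac12\lambda^2 d^2 e^{\lambda d}$ on $[-d,d]$ gives
\[
\sum_{j} p_{ij}\, e^{-\lambda \Delta_{ij}} \;\le\; 1-\lambda\sum_j p_{ij}\Delta_{ij}+\tfrac12\lambda^2 d^2 e^{\lambda d}\;\le\; 1-\lambda\epsilon+\tfrac12\lambda^2 d^2 e^{\lambda d}.
\]
The right-hand side equals $1$ at $\lambda=0$ with derivative $-\epsilon<0$, so it is $<1$ for all sufficiently small $\lambda$; fixing such a $\lambda$ yields $\expect[g(X_{n+1})\mid X_n=i]\le g(i)$ for every $i\ge N$. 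Thus $g(X_n)$ behaves as a supermartingale as long as the chain stays in $\{i\ge N\}$.

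Next I would let $\tau:=\inf\{n\ge 0:X_n\in\{0,\dots,N-1\}\}$ and consider the stopped process $g(X_{n\wedge\tau})$. Because the supermartingale inequality holds precisely on the region $\{i\ge N\}$ occupied strictly before $\tau$, a direct check shows $g(X_{n\wedge\tau})$ is a genuine nonnegative supermartingale bounded by $1$. Let $M:=\max_{0\le i<N}f(i)$, finite since $\{0,\dots,N-1\}$ is finite, and put $c:=e^{-\lambda M}$. On $\{\tau<\infty\}$ we have $f(X_\tau)\le M$, hence $g(X_\tau)\ge c$, so $\{\tau<\infty\}\subseteq\{\sup_n g(X_{n\wedge\tau})\ge c\}$. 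The maximal inequality for nonnegative supermartingales started at $i_0$ then gives
\[
\prob\big(\tau<\infty\mid X_0=i_0\big)\;\le\;\frac{g(i_0)}{c}\;=\;e^{-\lambda\,(f(i_0)-M)}.
\]

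Finally, since $f$ is unbounded and $\{0,\dots,N-1\}$ is finite, I can pick $i_0$ with $f(i_0)>M$ (any such state automatically lies in $\{i\ge N\}$), making the bound strictly less than $1$. Hence with positive probability the chain from $i_0$ never enters $\{0,\dots,N-1\}$, and in particular never hits $0$; for an irreducible chain this rules out recurrence, so the chain is transient. The step I expect to be the main obstacle is the supermartingale construction — extracting a single $\lambda>0$ that works uniformly over all $i\ge N$ from the drift and bounded-jump hypotheses — together with the easily overlooked but essential use of unboundedness to locate a starting state with $f(i_0)>M$; without the latter the displayed estimate is vacuous.
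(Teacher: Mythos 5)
The paper does not prove this theorem at all --- it is one of the four criteria stated without proof at the start of Section~2 and attributed to Fayolle--Malyshev--Menshikov --- so there is no internal argument to compare yours against. Judged on its own merits, your proof is correct, and it is the standard route to this criterion: the exponential transform $g=e^{-\lambda f}\in(0,1]$, a uniformly valid choice of small $\lambda$ extracted from the bounded-jump hypothesis via the Taylor bound, the stopped process $g(X_{n\wedge\tau})$, and the maximal inequality for non-negative supermartingales. One economy worth noting: the second half of your argument re-derives machinery the paper already has. Once you know that $g$ is bounded, non-constant, has non-positive drift on $\{i\ge N\}$, and (by unboundedness of $f$) that there is a $k\ge N$ with $g(k)<g(i)$ for all $i\le N$, Theorem~\ref{transient-mertens} gives transience immediately, with no need for the stopping-time and maximal-inequality estimate.

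The more important point is the one you flag at the outset, and you are right: the theorem as printed is false without an additional hypothesis, and your counterexample works. Concretely, take $d=1$, $\epsilon=\tfrac12$, $N=1$, $f(0)=1$ and $f(i)=0$ for $i\ge1$, with $p_{01}=1$ and $p_{i0}=p_{i,i+1}=\tfrac12$ for $i\ge1$: the drift of $f$ equals $\tfrac12$ at every $i\ge1$, the jump condition holds vacuously, yet the chain returns to $0$ after a geometric number of steps and is positive recurrent. The source of the discrepancy is that in the original Fayolle--Malyshev--Menshikov theorem the drift condition is imposed on a level set $\{i: f(i)>C\}$ (assumed non-empty) rather than on a tail $\{i\ge N\}$ of the state space; any state in such a level set automatically has $g$-value strictly below $\inf\{g(i):f(i)\le C\}$, which is exactly the role your unboundedness assumption plays. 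The minimal repair to the paper's formulation is to add the analogue of the hypothesis already present in Theorem~\ref{transient-mertens}: there exists $k\ge N$ with $f(k)>f(i)$ for all $i<N$ (unboundedness of $f$ being a clean sufficient condition). The paper's downstream use is unaffected, since Lemma~\ref{transience} applies the criterion with $f(i)=i$, which is unbounded.
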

\begin{remark}
    The last condition requires a sort of uniform bound on the growth of $f$. This condition is gonna play an important role later on in the special case when $f(i)=i$.
\end{remark}

\begin{theorem}[Mertens et al. \cite{mertens}]
\label{transient-mertens}
    The chain is transient if there exists some bounded non-constant function $f: \N_0 \rightarrow \R$ such that for some $N \geq 1$
    \begin{align} \label{transientineq}
        &\sum_{j \in S} p_{ij} (f(j) - f(i)) \leq 0 &&& \textup{for } i \geq N,
    \end{align} and for some $k \geq N$ it holds that $f(k) < f(i)$ for all $i \leq N$.
\end{theorem}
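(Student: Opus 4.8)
The plan is to read the drift inequality \eqref{transientineq} as a supermartingale property and then exploit the boundedness of $f$ through the martingale convergence theorem to show that, started from $k$, the chain avoids the low states with positive probability. First I would rewrite \eqref{transientineq} as $\expect[f(X_{n+1}) \mid X_n = i] \le f(i)$ for $i \ge N$, so that $f(X_n)$ decreases in conditional mean whenever the chain sits in $\{N, N+1, \dots\}$. Since $f(k) < f(i)$ for every $i \le N$ and $k \ge N$, the value $i=N$ forces $k \ne N$, hence $k \ge N+1$. I then start the chain at $X_0 = k$ and introduce the hitting time $\tau := \inf\{n \ge 0 : X_n \le N\}$ of the finite set $\{0,1,\dots,N\}$.

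Next I would check that the stopped process $M_n := f(X_{n \wedge \tau})$ is a genuine supermartingale: on $\{\tau \le n\}$ it is frozen, while on $\{\tau > n\}$ every coordinate $X_0,\dots,X_n$ exceeds $N$, so $X_n \ge N+1$ and the drift bound yields $\expect[M_{n+1}\mid \mathcal F_n] = \expect[f(X_{n+1})\mid \mathcal F_n] \le f(X_n) = M_n$. Because $f$ is bounded, $M_n$ is a bounded supermartingale, so it converges almost surely and in $L^1$ to some $M_\infty$, and the supermartingale property together with bounded convergence gives $\expect[M_\infty] = \lim_n \expect[M_n] \le \expect[M_0] = f(k)$.

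I would then argue by contradiction. If $\prob(\tau < \infty \mid X_0 = k) = 1$, then $M_\infty = f(X_\tau)$ with $X_\tau \le N$, so $M_\infty \ge m := \min_{i \le N} f(i)$ almost surely, whence $\expect[M_\infty] \ge m$. But the hypothesis $f(k) < f(i)$ for all $i \le N$ means precisely $f(k) < m$, producing the contradiction $\expect[M_\infty] \le f(k) < m \le \expect[M_\infty]$. Therefore $\prob(\tau = \infty \mid X_0 = k) > 0$. Since $0 \le N$, on the event $\{\tau = \infty\}$ the chain never reaches state $0$, so $\prob(T_0 < \infty \mid X_0 = k) \le \prob(\tau < \infty \mid X_0 = k) < 1$; for an irreducible chain recurrence would force this probability to equal $1$, so the chain is transient.

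The step I expect to be the main obstacle is not any single estimate but the passage to the limit in the stopped supermartingale: justifying $\expect[M_\infty] \le \expect[M_0]$ and the a.s.\ identification $M_\infty = f(X_\tau)$ on $\{\tau<\infty\}$. This is exactly where the boundedness of $f$ and the strict separation $f(k) < m$ are indispensable, and it is the reason the hypotheses are stated the way they are; the underlying idea is the classical bounded-supermartingale escape argument rather than anything new.
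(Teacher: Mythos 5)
Your proof is correct. Note that the paper itself states this theorem without proof (it is one of the four Foster--Lyapunov criteria quoted from the literature, here from Mertens, Samuel-Cahn and Zamir), so there is no internal proof to compare against; your argument --- turning the drift inequality \eqref{transientineq} into the supermartingale property of $M_n = f(X_{n\wedge\tau})$, invoking bounded (super)martingale convergence to get $\expect[M_\infty]\le f(k)$, and deriving the contradiction $f(k) < \min_{i\le N} f(i) \le \expect[M_\infty]$ if the set $\{0,\dots,N\}$ were hit almost surely --- is exactly the classical escape argument by which this criterion is established, and every step (the observation $k\ge N+1$, the freezing at $\tau$, the identification $M_\infty = f(X_\tau)$ on $\{\tau<\infty\}$, and the final appeal to the irreducible recurrence/transience dichotomy assumed throughout the paper) is sound.
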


\medskip

Finally, we present below the condition for recurrence of the chain.

\begin{theorem}[Mertens et al. \cite{mertens}]\label{recurrent-mertens}
    The chain is recurrent if there exists some function $f:\N_0 \rightarrow \R$ such that for some $N\geq 1$
\begin{align}
    &\sum_{j \in \N_0} p_{ij}(f(j)-f(i)) \leq 0 &&& \textup{for } i \geq N,
\end{align}
and $\lim_{i \rightarrow \infty} f(i) = \infty$.

\end{theorem}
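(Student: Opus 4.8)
The plan is to reduce recurrence, via irreducibility, to the statement that the chain almost surely hits the finite set $C := \{0,1,\dots,N-1\}$ from every starting state, and to establish this by a nonnegative-supermartingale argument built from $f$. Since $\lim_{i\to\infty} f(i) = \infty$, the function $f$ is bounded below and each sublevel set $\{i : f(i) \le M\}$ is finite; adding a constant (which leaves every difference $f(j)-f(i)$ unchanged) I may assume $f \ge 0$ without loss of generality.

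First I would fix a starting state $i \ge N$, set $\tau_C := \inf\{n \ge 0 : X_n \in C\}$, and consider the stopped process $M_n := f(X_{n\wedge\tau_C})$. For $n < \tau_C$ we have $X_n \ge N$, so the drift hypothesis $\sum_{j} p_{X_n\,j}(f(j)-f(X_n)) \le 0$ shows that $(M_n)$ is a nonnegative supermartingale for the natural filtration. By the supermartingale convergence theorem, $M_n$ converges almost surely to a finite limit $M_\infty$. On the event $\{\tau_C = \infty\}$ we have $M_n = f(X_n)$ for all $n$, so $f(X_n)$ stays bounded; because the sublevel sets of $f$ are finite, the trajectory $(X_n)$ is then confined to a finite set of states, and hence visits some single state $x$ infinitely often.

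The hard part — and the step where the supermartingale estimate alone is insufficient — is ruling out this scenario, i.e.\ showing $\prob_i(\tau_C=\infty)=0$. Here I would invoke irreducibility crucially: a convergent supermartingale does not by itself forbid the chain from wandering forever without reaching $C$, so one must combine the confinement to a finite set with a Borel--Cantelli-type argument. By irreducibility there exist $\ell \ge 1$ and $\delta > 0$ with $\prob_x(\tau_C \le \ell) \ge \delta$. Applying the strong Markov property at the successive return times to $x$, each visit to $x$ is followed by a hit of $C$ within $\ell$ steps with conditional probability at least $\delta$; if $x$ is visited infinitely often these furnish infinitely many trials of success probability bounded below, so $C$ is hit almost surely. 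This contradicts $\{\tau_C=\infty\}$, forcing $\prob_i(\tau_C<\infty)=1$ for every $i$.

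Finally I would assemble recurrence from $\prob_i(\tau_C<\infty)=1$. From any state the chain returns to the finite set $C$ after time one with probability one (take one step and, if it has left $C$, apply the previous paragraph), so iterating via the strong Markov property the chain visits $C$ infinitely often almost surely. Since $C$ is finite, some state of $C$ is visited infinitely often almost surely and is therefore recurrent; by irreducibility every state is then recurrent, which is the claim.
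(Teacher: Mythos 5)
The paper does not actually prove this theorem: it is one of the four criteria stated without proof at the start of Section~2 and attributed to Mertens et al.\ \cite{mertens}, so there is no in-paper argument to compare yours against; your proposal must stand on its own, and it does --- it is the standard supermartingale proof of this criterion, carried out correctly. The skeleton is sound: shift $f$ to be nonnegative (valid because $f\to\infty$ forces $\inf f>-\infty$), stop at $C=\{0,\dots,N-1\}$ so that $f(X_{n\wedge\tau_C})$ is a nonnegative supermartingale, use almost sure convergence together with finiteness of the sublevel sets of $f$ to confine the chain to a finite set on $\{\tau_C=\infty\}$, rule that event out via irreducibility, and bootstrap $\prob_i(\tau_C<\infty)=1$ into recurrence of some state of $C$; you also correctly identify irreducibility as the indispensable ingredient (without it the statement is false, e.g.\ for a chain with an absorbing state outside $C$, where the drift condition holds trivially). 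Two spots would need tightening in a full write-up. First, the state $x$ visited infinitely often is path-dependent, so you must decompose $\{\tau_C=\infty\}$ into the countably many events $\{\tau_C=\infty,\ X_n=x\ \text{infinitely often}\}$, $x\notin C$, and show each is null. Second, the phrase ``infinitely many trials of success probability at least $\delta$'' implicitly conditions on the future event that $x$ is visited infinitely often; the clean version takes successive visits to $x$ spaced at least $\ell$ steps apart as stopping times and shows by induction, via the strong Markov property, that the probability that none of the first $n$ windows hits $C$ is at most $(1-\delta)^n$. Alternatively, both of these repairs can be bypassed entirely by quoting the standard fact that a state visited infinitely often with positive probability is recurrent: if $\prob_i(\tau_C=\infty)>0$, your confinement argument already produces such a state, and irreducibility then yields recurrence of the whole chain directly, without ever needing to prove $\prob_i(\tau_C=\infty)=0$.
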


\section{Mean drifts and behaviour away from zero}

Let us now turn our attention to the concept of \textit{mean drift}, which is defined for every state $i \in \N_0$. Intuitively, the \textit{mean drift} at state $i$ tells you whether the chain is expected to jump upward (positive drift), downward (negative drift), or stay in the same state (zero drift) when located at $i$.

\begin{definition}
    The \textit{mean drift} at state $i$ is defined as $\gamma_i:=\expect(X_{n+1}-X_n|X_n=i)=\sum_{j\in \N_0} p_{ij}(j -i)$.
\end{definition}

Note that $-i\leq \gamma_i \leq +\infty$ for all $i \in \N_0$. Using the intuitive idea that a negative drift tends to pull the chain downward, Pakes  \cite{pakes} gives the following sufficient condition for positive recurrence.

\begin{lemma}[Pakes' Lemma \cite{pakes}] \label{pakeslemma}
    The Markov Chain is positive recurrent if $\gamma_i<\infty$ for all $i \in \N_0$ and $\limsup_{i \rightarrow \infty} \gamma_i < 0$.
\end{lemma}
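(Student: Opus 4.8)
The plan is to invoke Foster's theorem (Theorem~\ref{fosterthm}) with the linear Lyapunov function $f(i)=i$. This is the canonical choice, because with this $f$ the incremental drift appearing in Foster's criterion is exactly the mean drift: for every $i \in \N_0$,
\[
\sum_{j \in \N_0} p_{ij}\bigl(f(j)-f(i)\bigr) = \sum_{j \in \N_0} p_{ij}(j-i) = \gamma_i.
\]
Thus the two hypotheses on $(\gamma_i)_{i \in \N_0}$ should translate more or less directly into the two conditions \eqref{notinafoster} and \eqref{inafoster}, and the bulk of the proof is this dictionary translation.

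First I would extract a uniform strictly negative bound from the $\limsup$ assumption. Since $\limsup_{i \to \infty}\gamma_i < 0$, one can fix an $\epsilon>0$ with $\limsup_{i \to \infty}\gamma_i < -\epsilon$, and then by the definition of the limit superior there is an $N \geq 1$ such that $\gamma_i \leq -\epsilon$ for all $i \geq N$. By the displayed identity this is precisely the negative-drift condition \eqref{notinafoster} of Foster's theorem. Next I would check the integrability condition \eqref{inafoster} for the finitely many boundary states $i<N$: since $\sum_{j \in \N_0} p_{ij}=1$, we have $\sum_{j \in \N_0} p_{ij}f(j) = \sum_{j \in \N_0} p_{ij}\, j = \gamma_i + i$, which is finite because $\gamma_i<\infty$ by hypothesis. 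As $f(i)=i \geq 0$ is non-negative, all the hypotheses of Foster's theorem are verified, and positive recurrence follows.

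I do not anticipate a genuine obstacle, since the argument is essentially a reformulation of the hypotheses in Foster's language. The only point that deserves care is the precise role of the finiteness assumption $\gamma_i<\infty$: for the tail states $i \geq N$ finiteness is already implied by the bound $\gamma_i \leq -\epsilon$, so the substantive content of ``$\gamma_i<\infty$ for all $i$'' is used exactly to secure condition \eqref{inafoster} at the boundary states $i<N$, where it is not otherwise automatic. Making sure this hypothesis is invoked in the right place, rather than treating it as vacuous, is the one subtlety I would be careful about.
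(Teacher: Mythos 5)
Your proof is correct and follows essentially the same route as the paper's: both invoke Foster's theorem with $f(i)=i$, extract a uniform bound $\gamma_i \leq -\epsilon$ for $i \geq N$ from the $\limsup$ hypothesis to get condition \eqref{notinafoster}, and use finiteness of the drifts to verify \eqref{inafoster} at the states $i<N$. Your closing remark about where $\gamma_i<\infty$ is genuinely needed is a nice clarification, but the argument itself is the paper's argument.
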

\begin{proof}
    The last condition implies that there exists $\varepsilon>0$ and $N \geq 1$ such that $\gamma_i \leq -\varepsilon$ for all $i \geq N$. In other words, we have $ \sum_{j \in \N_0} p_{ij}(j-i) \leq -\epsilon$, for all $i\geq N$. Since all drifts are finite, we also have $\sum_{j \in \N_0}p_{ij} \, j < \infty$ for all $i \in \N_0$. Therefore, the conditions of Foster's theorem are satisfied with $f(i)=i$ and the chain is positive recurrent. 
\end{proof}

Similarly, one can conclude recurrence  of the chain when the drifts become eventually nonpositive, as illustrated by the following lemma.

\begin{lemma}
    The chain is recurrent if there exists $N\geq1$ such that $\gamma_i \leq 0$ for all $i \geq N$.
\end{lemma}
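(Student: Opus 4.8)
The plan is to recognize this statement as an immediate consequence of the Mertens et al. recurrence criterion (Theorem~\ref{recurrent-mertens}), applied with the identity Lyapunov function $f(i)=i$, in direct analogy with the way Pakes' Lemma followed from Foster's theorem. The whole proof amounts to checking that $f(i)=i$ satisfies the two hypotheses of that theorem.

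First I would verify the growth condition. With $f(i)=i$ we trivially have $\lim_{i\to\infty} f(i)=\infty$, so the second hypothesis of Theorem~\ref{recurrent-mertens} is satisfied without any further work. Next I would translate the drift assumption into the required drift inequality. For this choice of $f$ the incremental term is exactly the mean drift,
\[
\sum_{j\in\N_0} p_{ij}\bigl(f(j)-f(i)\bigr)=\sum_{j\in\N_0} p_{ij}(j-i)=\gamma_i,
\]
and by hypothesis $\gamma_i\leq 0$ for all $i\geq N$. This is precisely the first hypothesis of Theorem~\ref{recurrent-mertens}, so invoking that theorem yields recurrence of the chain.

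The only point requiring a moment's care—and the closest thing to an obstacle here—is ensuring that the displayed sum is genuinely well-defined rather than an indeterminate expression of the form $\infty-\infty$. This is, however, guaranteed by the assumption itself: the bound $\gamma_i\leq 0$ forces the drift to be finite (indeed nonpositive) for every $i\geq N$, which is exactly the range of indices for which Theorem~\ref{recurrent-mertens} requires the inequality. No condition at all is imposed on the finitely many states $i<N$, so nothing further needs to be verified there, and the argument is complete in essentially one line.
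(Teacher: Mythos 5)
Your proposal is correct and matches the paper's proof exactly: the paper likewise applies Theorem~\ref{recurrent-mertens} with $f(i)=i$, under which the drift inequality $\gamma_i\leq 0$ for $i\geq N$ is precisely the required hypothesis. Your extra remark on well-definedness is fine (and essentially automatic, since on $\N_0$ the negative part of the drift sum is bounded below by $-i$, so $\gamma_i$ can never be $-\infty$).
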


\begin{proof}
    The conclusion follows from Theorem~\ref{recurrent-mertens} applied with $f(i)=i$.
\end{proof}

We now turn our attention to those Markov Chains whose mean drift at each state is (eventually) non-negative. Contrary to intuition, one can construct positve recurrent Markov Chains for which the mean drift even diverges to $+ \infty$. The following classical example \cite{sennott} illustrates this.

\begin{example}\label{ergodicdrift}
    Let the chain be governed by the transition probabilities $p_{0i}=\frac{1}{2^i}$, and $p_{i0}=p_{i,3i}=\frac{1}{2}$ for all $i \geq 1$. The point of allowing transitions from $0$ to any other state is to make the chain irreducible. Note that $\inf_{i \in \N} p_{i0}=1/2>0$, therefore, by Corollary~\ref{awayfromzero}, the chain is positive recurrent. However,  for $i >0$,
 \begin{align*}
     &\gamma_i=\frac{i}{2} \rightarrow \infty \quad\textup{as } i \rightarrow \infty.
 \end{align*}
 \end{example}

 It becomes clear then that the chain must behave in a more specific way in order to classify its structure when the mean drifts are non-negative. The issue in the example above is that we allow the chain to revisit the zero state from any other state. If, instead, we restrict the downward movement of the chain only to "nearby" states, we can better analyse the chain's behaviour using mean
 drifts. This restriction is illustrated in the following definition.

\begin{definition}
    A Markov Chain is called \textit{downward uniformly bounded} if there exists $k \in \N$ such that $i-j>k$ implies $p_{ij}=0$.
\end{definition}
 Kaplan's criterion \cite{kaplan, sennott} then gives non-ergodicity of the chain when the mean drifts are (eventually) non-negative and the chain is downward uniformly bounded.

\begin{theorem}[Kaplan's criterion for non-ergodicity]\label{kaplancriterion}
    The chain is \underline{not} ergodic if it is downward uniformly bounded, $\gamma_i<\infty$ for all $i\geq 0$, and, for some $N \geq 1$, $\gamma_i\geq 0$ for all $i \geq N$.
\end{theorem}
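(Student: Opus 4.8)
The plan is to show directly that the chain is not positive recurrent by producing a single state from which the expected time to reach $0$ is infinite. I will use the standard fact (see \cite{norris}) that an irreducible \emph{positive recurrent} chain has $\expect_i[\tau_0]<\infty$ for every state $i$, where $\tau_0=\inf\{n\ge 0:X_n=0\}$; indeed, positive recurrence gives $\expect_0[T_0]=1/\pi_0<\infty$, and a first-step decomposition $\expect_0[T_0]=1+\sum_j p_{0j}\expect_j[\tau_0]$ together with irreducibility propagates finiteness of $\expect_j[\tau_0]$ along any path out of $0$. Hence it suffices to exhibit one state $i$ with $\expect_i[\tau_0]=\infty$. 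I fix the finite set $A=\{0,1,\dots,N-1\}$, start the chain at $X_0=i$ for an arbitrary $i\ge N$, and set $\tau=\inf\{n\ge 0:X_n\in A\}$. Since $A\ni 0$ we have $\tau\le\tau_0$, so it is enough to prove $\expect_i[\tau]=\infty$.

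The key point is that $f(i)=i$ drifts upward away from $A$. For $0\le n<\tau$ we have $X_n\ge N$, hence $\expect[X_{n+1}-X_n\mid\mathcal F_n]=\gamma_{X_n}\ge 0$; because $X$ is nonnegative and the one-step increment of the stopped process equals $\gamma_{X_n}\ge 0$ on $\{\tau>n\}$ and $0$ afterwards, a one-line induction on $n$ gives $\expect[X_{n\wedge\tau}]\ge\expect[X_0]=i$ for every $n$, all quantities being read in $[0,\infty]$. Writing $U_n=\sum_{l<n\wedge\tau}(X_{l+1}-X_l)^+$ and $V_n=\sum_{l<n\wedge\tau}(X_{l+1}-X_l)^-$, we have $X_{n\wedge\tau}=i+U_n-V_n$. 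The downward uniform bound enters decisively here: since $p_{X_l,j}=0$ whenever $X_l-j>k$, the conditional mean of each negative increment is at most $k$, so with $V_\infty=\lim_n V_n$ we obtain $\expect[V_\infty]\le k\,\expect[\tau]$. The finiteness of all drifts, in turn, is what keeps the positive increments summable against the negative ones.

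To finish, suppose for contradiction that $\expect_i[\tau]<\infty$. Then $\tau<\infty$ almost surely and $\expect[V_\infty]\le k\,\expect[\tau]<\infty$. On $\{\tau<\infty\}$ the definition of $\tau$ forces $X_\tau\le N-1$, and $X_\tau=i+U_\infty-V_\infty$ with $U_\infty=\lim_n U_n$; the deterministic ceiling $X_\tau\le N-1$ gives $U_\infty\le V_\infty+(N-1-i)$, whence $\expect[U_\infty]<\infty$ as well. Subtracting the finite quantity $\expect[V_n]$ in $\expect[X_{n\wedge\tau}]=i+\expect[U_n]-\expect[V_n]\ge i$ yields $\expect[U_n]\ge\expect[V_n]$, and letting $n\to\infty$ by monotone convergence gives $\expect[U_\infty]\ge\expect[V_\infty]$; therefore $\expect[X_\tau]=i+\expect[U_\infty]-\expect[V_\infty]\ge i$, contradicting $\expect[X_\tau]\le N-1<i$. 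Hence $\expect_i[\tau]=\infty$, so $\expect_i[\tau_0]=\infty$ and the chain is not positive recurrent. I expect this limiting step to be the main obstacle: since only the \emph{mean} drift is assumed finite, the upward increments $(X_{l+1}-X_l)^+$ need not be bounded, so an ordinary optional-stopping or uniform-integrability argument is unavailable; the device that rescues the proof is to control $U_\infty$ by $V_\infty$ through the almost-sure ceiling $X_\tau\le N-1$, which is exactly where the finiteness of the drifts and the downward uniform bound combine.
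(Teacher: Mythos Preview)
Your argument is correct but follows a genuinely different route from the one the paper indicates. The paper does not actually prove Theorem~\ref{kaplancriterion}; it cites Kaplan, and the remark following the statement explains that Kaplan's original proof is analytic, working with the functions $K_i(z)=(z^i-\sum_j p_{ij}z^j)/(1-z)$ and invoking Fatou's lemma, the uniform lower bound $K_i(z)\ge -B$ being exactly what justifies that limit passage. The paper's only contribution here is the lemma that downward uniform boundedness implies this analytic bound. You instead give a direct probabilistic proof: run the stopped process $X_{n\wedge\tau}$, use $\gamma_{X_n}\ge 0$ on $\{\tau>n\}$ to get $\expect[U_n]\ge\expect[V_n]$ term by term, bound $\expect[V_\infty]\le k\,\expect[\tau]$ from the almost-sure jump bound $(X_{l+1}-X_l)^-\le k$, and close with the ceiling $X_\tau\le N-1$. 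This is more elementary and makes the role of each hypothesis transparent.

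Two small remarks. First, your proof never actually uses the hypothesis $\gamma_i<\infty$: the inequality $\gamma_{X_l}^+\ge\gamma_{X_l}^-$ that drives $\expect[U_n]\ge\expect[V_n]$ holds just as well when $\gamma_{X_l}=+\infty$, so your sentence crediting ``the finiteness of all drifts'' with keeping $U_\infty$ under control is misplaced---it is the ceiling $X_\tau\le N-1$ that does that job. Kaplan's generating-function argument, by contrast, does need finiteness to make $K_i$ well defined. Second, your ``propagation'' of $\expect_j[\tau_0]<\infty$ along a path out of $0$ is correct but terse; it works because for $j\neq 0$ one has $\expect_j[\tau_0]=1+\sum_l p_{jl}\expect_l[\tau_0]$, so finiteness at $j$ forces finiteness at every $l$ with $p_{jl}>0$, and irreducibility supplies a shortest path from $0$ to your chosen $i\ge N$ avoiding $0$.
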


\begin{remark}
    In the original paper \cite{kaplan}, Kaplan introduces what we shall call as \textit{Kaplan's function} $K_i:[0,1)\rightarrow \R$ with $K_i(z)=(z^i-\sum_j p_{ij}z^j)/(1-z)$. Instead of requiring the chain to be downward uniformly bounded, Kaplan's original condition imposes that there~exists $B \geq 0$, $N \in \N$ and $c \in [0,1)$ such that $K_i(z)\geq -B$ for all $i \geq N$ and $z \in [c,1)$. Kaplan's original condition exists solely to justify the application of Fatou's lemma at one stage of the proof of this theorem. However, since this condition is not easy to check, we look specifically at downward uniformly bounded chains. We show in the following lemma that downward uniform boundedness of the chain implies that Kaplan's original condition holds.
\end{remark}

\begin{lemma}
    For a downward uniformly bounded chain, Kaplan's original condition holds.
\end{lemma}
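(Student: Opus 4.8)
The plan is to prove the uniform lower bound $K_i(z) \geq -k$ directly, where $k \in \N$ is the downward bound furnished by the hypothesis (so $p_{ij}=0$ whenever $i-j>k$). This single estimate immediately yields Kaplan's original condition with $B=k$ and with $N$, $c$ arbitrary (say $N=1$, $c=0$). The starting point is to rewrite the numerator of $K_i$ using the fact that the transition probabilities sum to one. Since $\sum_{j\in\N_0} p_{ij}=1$, we have $z^i = \sum_{j\in\N_0} p_{ij}\,z^i$, and therefore
\[
  z^i - \sum_{j\in\N_0} p_{ij}\,z^j = \sum_{j\in\N_0} p_{ij}\,(z^i - z^j).
\]
All the series here converge for $z\in[0,1)$, because $0\le z^j\le 1$ and $\sum_j p_{ij}=1$, so the rearrangement is legitimate. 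Dividing by $1-z>0$ gives the convenient representation $K_i(z)=\sum_{j\in\N_0} p_{ij}\,\dfrac{z^i-z^j}{1-z}$.

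Next I would split this sum according to the direction of the transition, writing $K_i(z)$ as the sum of an \emph{upward} part over indices $j\ge i$ and a \emph{downward} part over indices $j<i$. For the upward terms, if $j\ge i$ then $z^j\le z^i$ for $z\in[0,1)$, so $z^i-z^j\ge 0$ while $1-z>0$; hence every upward term is non-negative. Consequently the upward part contributes something $\ge 0$ and may simply be discarded when seeking a lower bound for $K_i(z)$.

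It remains to bound the downward part from below, which is the heart of the argument. For $j<i$ the finite geometric identity gives
\[
  \frac{z^i-z^j}{1-z} = -\left(z^j + z^{j+1} + \cdots + z^{i-1}\right).
\]
This is exactly where downward uniform boundedness enters: since $p_{ij}=0$ for $i-j>k$, the only downward indices contributing carry $i-j\le k$, so each bracket above is a sum of at most $k$ terms, each lying in $[0,1)$, and is therefore bounded above by $k$. Thus $\frac{z^i-z^j}{1-z}\ge -k$ for every relevant $j$. Multiplying by $p_{ij}\ge 0$ and summing over $j<i$ yields a downward part that is at least $-k\sum_{j<i} p_{ij}\ge -k$. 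Combining this with the non-negativity of the upward part gives $K_i(z)\ge -k$ for all $i$ and all $z\in[0,1)$, establishing Kaplan's condition.

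The only point requiring care is the upward sum, which may be infinite because upward jumps are unbounded. But since I only need a lower bound and each upward term is non-negative, this sum can be dropped outright, so no genuine analytic obstacle arises; the downward sum, meanwhile, is always finite precisely because of the boundedness hypothesis. The estimate is thus both elementary and uniform in $i$ and $z$.
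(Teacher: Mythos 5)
Your proof is correct and follows essentially the same route as the paper: rewrite the numerator as $\sum_j p_{ij}(z^i-z^j)$, discard the nonnegative upward terms, and control the downward terms via the finite geometric sum $\frac{z^i-z^j}{1-z}=-(z^j+\dots+z^{i-1})$, which has at most $k$ terms by downward uniform boundedness. The only difference is bookkeeping: by using $\sum_{j<i}p_{ij}\le 1$ instead of bounding each $p_{ij}$ by $1$ separately, you obtain the slightly sharper constant $B=k$ in place of the paper's $B=k(k+1)/2$, which is immaterial for Kaplan's condition.
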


\begin{proof}
    Suppose the chain is downward uniformly bounded, that is, there exists a $k \in \N$ such that $p_{ij}=0$ for all $j<i-k$ and all $i \in \N_0$. Then, we have, for all $c \in (0,1)$ and $z \in [c,1)$,
   \begin{align*}
       z^i-\sum_jp_{ij}z^j&=\sum_{j \geq i-k}p_{ij}(z^i-z^{j})\\
       &\geq\sum_{j=i-k}^{i-1}p_{ij}(z^i-z^j)=-(1-z)\sum_{j=i-k}^{i-1}p_{ij}\frac{z^j-z^i}{1-z}\\
       &=-(1-z)\sum_{j=i-k}^{i-1}p_{ij}z^{j}\frac{1-z^{i-j}}{1-z}=-(1-z)\sum_{j=i-k}^{i-1}p_{ij}z^j\sum_{m=0}^{i-j+1}z^m,
       \end{align*}
       where the last equality follows from $1-z^n=(1-z)(1+\dots+z^{n-1})$. Observe that $z^j\sum_{m=0}^{i-j+1}z^m \leq (i-j)$, as $z<1$. Thus,
       \begin{align*}
           z^i-\sum_jp_{ij}z^j&\geq-(1-z)\sum_{j=i-k}^{i-1}p_{ij}(i-j)=-(1-z)\sum_{j=1}^{k}p_{i,i-j}j\\&\geq -(1-z)\sum_{j=1}^k j=-(1-z)\frac{k(k+1)}{2}.
       \end{align*}
       Thus, Kaplan's original condition holds with $B=k(k+1)/2$.
\end{proof}

Additionally from Kaplan's criterion for non-ergodicity, we can separately present two sufficient conditions, one for null recurrence and one for transience. In particular,  for downward uniformly bounded chains, if the mean drifts are eventually zero, then the chain is null recurrent.

\begin{lemma}
\label{nullrecurrence}
    If the chain is downward uniformly bounded, $\gamma_i<\infty$ for all $i\geq 0$, and, for some $N \in \N$, $\gamma_i= 0$ for~all~$i \geq N$, then it is null recurrent.
\end{lemma}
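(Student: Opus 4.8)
The plan is to exploit the decomposition of null recurrence into two separate assertions: that the chain is recurrent, and that it is not positive recurrent. Since the excerpt establishes that an irreducible chain has all states of the same type, it suffices to rule out transience on one hand and positive recurrence (ergodicity) on the other; what remains must be null recurrence. The strategy is therefore to apply two results already proved in the excerpt, one to each half.

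First I would establish recurrence. The hypothesis gives $\gamma_i = 0$ for all $i \geq N$, so in particular $\gamma_i \leq 0$ for all $i \geq N$. This is exactly the hypothesis of the recurrence lemma proved just above (the one deducing recurrence from eventually nonpositive drifts via Theorem~\ref{recurrent-mertens} with $f(i)=i$). Hence the chain is recurrent.

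Next I would rule out ergodicity. Here I would invoke Kaplan's criterion, Theorem~\ref{kaplancriterion}. Its three hypotheses are met verbatim: the chain is assumed downward uniformly bounded, the drifts satisfy $\gamma_i < \infty$ for all $i \geq 0$, and $\gamma_i = 0 \geq 0$ for all $i \geq N$ gives the required eventual nonnegativity. Theorem~\ref{kaplancriterion} then yields that the chain is not ergodic, i.e. not positive recurrent.

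Combining the two conclusions, the chain is recurrent but not positive recurrent, so by the dichotomy for irreducible chains it is null recurrent, as claimed. I do not anticipate a genuine obstacle here, since the lemma is essentially the intersection of the two preceding results; the only point requiring care is to notice that the single condition $\gamma_i = 0$ simultaneously supplies the nonpositivity needed for recurrence and the nonnegativity needed for Kaplan's non-ergodicity criterion, which is precisely why the equality (rather than a one-sided inequality) forces the borderline null-recurrent case.
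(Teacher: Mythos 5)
Your proposal is correct and follows essentially the same route as the paper: the paper's proof also invokes Kaplan's criterion (Theorem~\ref{kaplancriterion}) to exclude ergodicity and Theorem~\ref{recurrent-mertens} with $f(i)=i$ (which is exactly the content of the recurrence lemma you cite) to establish recurrence, then combines the two via the recurrence dichotomy.
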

\begin{proof}
    Observe that the conditions for Kaplan's criterion hold, therefore the chain is non-ergodic. To see that it is recurrent, we apply Theorem~\ref{recurrent-mertens} with $f(i)=i$. Since the condition in the theorem is equivalent in terms of mean drifts with $\gamma_i \leq 0$ for all $i \geq N $, we conclude the chain is recurrent, and thus null recurrent.
\end{proof}

Let us now see two examples of classes of chains
that are null recurrent. This easy-to-check condition allows us to immediately conclude the chains are null-recurrent, without going through lengthy computations.

\begin{example}\label{sitstill}
    Let $k \in \N$. Consider a random walk on $\N_0$ in which the chain can "stay still" at a certain state. The transition probabilities are $p_{i,i-k}=\dotso=p_{i,i+k}=\frac{1}{2k+1}$ for~$i \geq k$ and $p_{i,i+1}=1$ for $i<k$.
    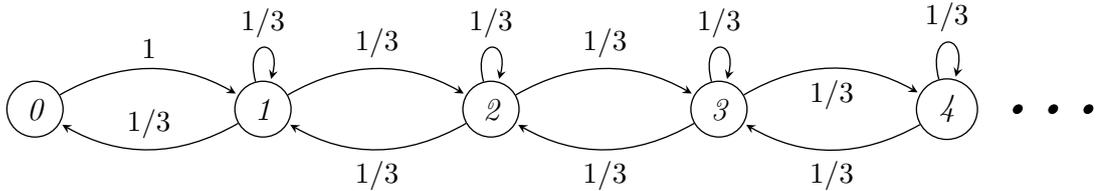
\begin{figure}[h!]
 \centering
 \begin{tikzpicture}[->,>=stealth ,shorten >=1pt, line width=0.5pt,
 node distance=3cm]
 \node [circle, draw] (zero) {0};
 \node [circle, draw] (one) [right of=zero] {1};
 \node [circle, draw] (two) [right of=one] {2};
  \node [circle, draw] (three) [right of=two] {3};
  \node [circle, draw] (four) [right of=three] {4};
 \path (zero) edge [bend left] node[above] {\small{$1$}} (one);
 \path (one) edge[bend left] node[above] {\small{$1/3$}} (zero);
  \path (one) edge [loop above] node[above] {\small{$1/3$}} (one);
 \path (one) edge[bend left] node[above]{\small{$1/3$}} (two);
 \path (two) edge[bend left] node[below]{\small{$1/3$}} (one);
   \path (two) edge [loop above] node[above] {\small{$1/3$}} (two);
 \path (two) edge[bend left] node[above]{\small{$1/3$}} (three);
 \path (three) edge[bend left] node[below]{\small{$1/3$}} (two);
  \path (three) edge [loop above] node[above] {\small{$1/3$}} (three);
 \path (four) edge[bend left] node[below]{\small{$1/3$}} (three);
 \path (three) edge[bend left] node[below]{\small{$1/3$}} (four);
   \path (four) edge [loop above] node[above] {\small{$1/3$}} (four);

 \node[right=2mm of four,font=\bfseries\Huge]{\dots};        
 \end{tikzpicture}
 \caption{State transition diagram of Example~\ref{sitstill} for $k=1$}
 \label{fig:randomwalk}
 \end{figure}
\par The chain is downward uniformly bounded as $p_{ij}=0$ for $j<i-k$. Note that $\gamma_0=~\dotso~=~\gamma_{k-1}=1$ and $\gamma_i=0$ for $i \geq k$. Therefore, by Lemma~\ref{nullrecurrence}, the chain is null~recurrent.
    
\end{example}

\begin{example}\label{lopsided}
    Consider two natural numbers $m,n \in \N$ with $\gcd(m,n)=1$. Consider the chain with transition probabilities $p_{i,i+n}=\frac{m}{m+n}$ for all $i$, $p_{i,i-m}=\frac{n}{m+n}$ for $i \geq m$ and $p_{ii}=\frac{n}{m+n}$ for $i<m$.
    \begin{figure}[h!]
 \centering
 \begin{tikzpicture}[->,>=stealth ,shorten >=1pt, line width=0.5pt,
 node distance=2cm]
 \node [circle, draw] (zero) {0};
 \node [circle, draw] (one) [right of=zero] {1};
 \node [circle, draw] (two) [right of=one] {2};
  \node [circle, draw] (three) [right of=two] {3};
  \node [circle, draw] (four) [right of=three] {4};
  \node[circle, draw] (five) [right of=four] {5};
 \path (zero) edge [bend left] node[above] {\small{$1/3$}} (two);
 \path (zero) edge [loop above] node[above] {\small{$2/3$}} (zero);
 \path (one) edge[bend left] node[below] {\small{$2/3$}} (zero);
  \path (one) edge [bend left] node[above] {\small{$1/3$}} (three);
 \path (two) edge[bend left] node[below]{\small{$2/3$}} (one);
 \path (two) edge[bend left] node[above]{\small{$1/3$}} (four);
 \path (three) edge[bend left] node[below]{\small{$2/3$}} (two);
  \path (three) edge [bend left] node[above] {\small{$1/3$}} (five);
 \path (four) edge[bend left] node[below]{\small{$2/3$}} (three);
    \path (five) edge[bend left] node[below]{\small{$2/3$}} (four);
 \node[right=2mm of five,font=\bfseries\Huge]{\dots};        
 \end{tikzpicture}
 \caption{State transition diagram of Example~\ref{lopsided} for $m=1$ and $n=2$}
 \label{fig:randomwalk}
 \end{figure}
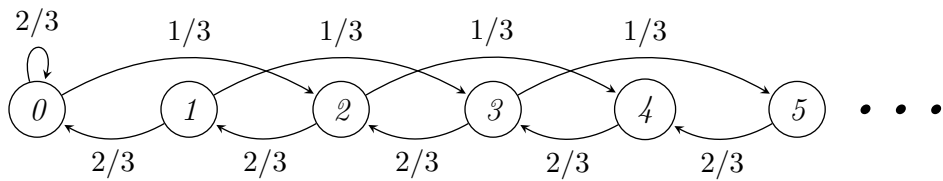
    
    \par Observe that the chain is downward uniformly bounded. For $i \geq m$, we have \begin{equation*}
        \gamma_i=\frac{m}{m+n}n+\frac{n}{m+n}(-m)=0.
    \end{equation*} 
   Therefore, the chain is null recurrent by Lemma~\ref{nullrecurrence}.
   \par In particular, note that, for $m=n=1$, the chain behaves as a simple symmetric random walk, which is known to be null recurrent. However, proving null recurrence for the simple symmetric random walk is a rather involved task by elementary means and usually relies on large Sterling approximations, see for instance \cite[Example~1.6.1]{norris}.
\end{example}

We can also similarly construct an equivalent statement to Pakes' Lemma for transience for those chains which are uniformly bounded also from above, which we define below.

\begin{definition}
    A Markov Chain is called \textit{uniformly bounded} if there exists $d \in \N$ such that $|i-j|>d$ implies $p_{ij}=0$.
\end{definition}

\begin{lemma}\label{transience}
    The Markov Chain is \textit{transient} if it is uniformly bounded, $\gamma_i<\infty$ for all $i\geq 0$, and $\liminf_{i \rightarrow \infty} \gamma_i >0$.
\end{lemma}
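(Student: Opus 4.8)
The plan is to apply the Fayolle-Malyshev-Menshikov transience criterion (Theorem~\ref{fayolle-transience}) with the simplest possible Lyapunov function, namely the identity $f(i)=i$. This is the transience counterpart to Pakes' Lemma (Lemma~\ref{pakeslemma}), where the same function $f(i)=i$ was fed into Foster's theorem. The remark following Theorem~\ref{fayolle-transience} already flags that the growth bound on $f$ becomes important precisely in the case $f(i)=i$, which is exactly what makes this work.

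First I would note that $f(i)=i$ is non-negative on $\N_0$, as the theorem requires, and that the incremental drift of this function is literally the mean drift:
\begin{equation*}
    \sum_{j \in \N_0} p_{ij}(f(j)-f(i)) = \sum_{j \in \N_0} p_{ij}(j-i) = \gamma_i.
\end{equation*}
Next I would unpack the hypothesis $\liminf_{i \to \infty} \gamma_i > 0$: by definition of the limit inferior, there exist $\epsilon > 0$ and $N \geq 1$ such that $\gamma_i \geq \epsilon$ for all $i \geq N$. In view of the identity above, this is exactly the first inequality required by Theorem~\ref{fayolle-transience}.

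The remaining condition of Theorem~\ref{fayolle-transience} is the uniform growth bound: there must exist $d > 0$ with $|f(i)-f(j)| > d \implies p_{ij} = 0$. With $f(i)=i$ we have $|f(i)-f(j)| = |i-j|$, so this requirement reads $|i-j| > d \implies p_{ij}=0$, which is precisely the definition of a uniformly bounded chain. Hence the uniform boundedness hypothesis supplies this condition verbatim, with the $d$ from the definition of uniform boundedness. Both hypotheses of Theorem~\ref{fayolle-transience} are therefore met, and the chain is transient.

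I do not expect any genuine obstacle here; the content is entirely in recognizing that the two structural hypotheses of the lemma (positive asymptotic drift and uniform boundedness) are exactly the two hypotheses of Theorem~\ref{fayolle-transience} once $f$ is specialized to the identity. The only point requiring a line of care is the translation of $\liminf_{i\to\infty}\gamma_i > 0$ into a uniform lower bound $\gamma_i \geq \epsilon$ valid for all sufficiently large $i$, mirroring the corresponding step in the proof of Pakes' Lemma. The finiteness assumption $\gamma_i < \infty$ is automatic under uniform boundedness (only finitely many $p_{ij}$ are nonzero for each $i$) but is harmless to carry along.
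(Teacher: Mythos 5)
Your proposal is correct and follows exactly the paper's own argument: extract $\epsilon>0$ and $N$ from the $\liminf$ condition, then apply Theorem~\ref{fayolle-transience} with $f(i)=i$, with uniform boundedness supplying the bounded-jump condition of that theorem. Your write-up is simply a more detailed unpacking of the same two-line proof.
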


\begin{proof}
   The last condition implies that there exists $\epsilon>0$ and $N \geq 1$ such that $\gamma_i \geq \epsilon$ for~all $i \geq N$. The result then follows Theorem~\ref{fayolle-transience} taken with $f(i)=i$ for all $i$.
\end{proof}

\section{Behaviour around 0 and a condition for transience}

In the previous section, we have established mean drift conditions for the chain, in the cases when $\gamma_i$ is bounded away from 0 everywhere except for finitely many states. We have also seen that when the mean drift is \textit{exactly} zero for all but finitely many states, then the chain is null recurrent. 

The behaviour of the chain is unpredictable when the mean drifts are eventually non-negative and they converge to $0$, as can be seen from the examples below.

\begin{example}
    Let the chain be governed by the transition probabilities $p_{0i}=\frac{1}{2^i}$ for all $i \geq 1$, $p_{10}=p_{12}=\frac{1}{2}$ and $p_{i0}=\frac{1}{2}, p_{i,3i}=\frac{1}{4}, p_{i, i+1}= \frac{1}{8}+\frac{1}{i}, p_{i, i-1}=\frac{1}{8}-\frac{1}{i}$ for $i \geq 2$. Once again, observe that $\inf_{i \in \N} p_{i0}=1/2>0$, therefore, by Corollary~\ref{awayfromzero}, the chain is ergodic. However,  for $i \geq 2$,
    \begin{align*}
        \gamma_i&= 2i \cdot \frac{1}{4} +\frac{1}{8}+ \frac{1}{i} - \left(\frac{1}{8} -\frac{1}{i} \right) - i \cdot \frac{1}{2} = \frac{2}{i} \rightarrow 0  \quad \textbf{as } i \rightarrow \infty.
    \end{align*}
\end{example}

\begin{example}
    Consider a birth-death discrete process with transition probabilities $p_{i,i+1}=p_i$ and $p_{i,i-1}=q_i$. The condition \cite{karlin} for recurrence is then $\sum_{i=1}^\infty \prod_{j=1}^i \frac{q_j}{p_j} = \infty$ and $\sum_{i=1}^\infty \prod_{j=1}^i \frac{q_j}{p_j} < \infty$ for transience.

    \par Take
$p_i = \frac{1}{2} + \frac{1}{2(i+1)}, q_i = \frac{1}{2} - \frac{1}{2(i+1)}$. The mean drift at state $i$ is $\frac{1}{i+1} \to 0 \text{ as } n \to \infty$. Note that
\[
\rho_i := \prod_{j=1}^{i} \frac{q_j}{p_j} = \prod_{j=1}^{i} \frac{j}{j+2} = \frac{2}{(j+1)(j+2)}.
\]
Then $\sum_{i=0}^\infty \rho_i < \infty$, as each term is of order $i^{-2}$, 
so the chain is transient.
\par Now take $
p_i = \frac{1}{2} + \frac{1}{4(i+1)}, q_i = \frac{1}{2} - \frac{1}{4(i+1)}$. The mean drift is
$\gamma_i = p_i - q_i = \frac{1}{2(i+1)}  \to 0$. However, 
$\rho_i := \prod_{j=1}^{i} \frac{q_j}{p_j} = \prod_{j=1}^{n} \frac{2j+1}{2j+3} = \frac{3}{2j+3},$
so that
$
\sum_{i=0}^\infty \rho_i  = \infty$ and the chain is recurrent.
\end{example}

The above examples show that it is not so straightforward anymore to deduce the behaviour of the chain when the mean drifts are asymptotically zero. Lamperti \cite{lamperti} has shown some conditions for recurrence and transience of the chain around zero in terms of the \textit{second-moment drifts} $\sigma_i:=\expect((X_{n+1}-X_n)^2 \,|\, X_n=i )$. Esentially, Lamperti's criterion says that the chain is recurrent if $\gamma_i \leq \theta \frac{\sigma_i}{2i}$ for all large $i$ and $\theta<1$, and it is transient if $\gamma_i \geq \theta \frac{\sigma_i}{2i}$ for all large $i$ and $\theta>1$, provided that all $\sigma_i$ are bounded away from zero. \par Subsequent work on chains with asymptotically zero drift involve some sort of regularity assumptions on $\sigma_i$ \cite{denisov}. In this final section, we shall present a new transience criterion for uniformly bounded chains with asymptotically zero drifts, which makes no assumption on second-moment drifts.

\bigskip

\begin{theorem}[A transience criterion for locally regular monotone drifts]\label{thm:transience}
Let the chain be uniformly bounded with bound $d$. Suppose there exists $N \geq 1$ such that for all $i\geq N$:
\begin{enumerate}
  \item[(i)] $\gamma_i>0$;
  \item[(ii)] the sequence $(\gamma_i)_{i\ge N}$ is non-increasing;
  \item[(iii)] $\displaystyle\sum_{k=0}^\infty \gamma_k^2<\infty$
  \item[(iv)] the local oscillation $\displaystyle \delta_i:=\max_{|k-i|\le d}|\gamma_k^2-\gamma_i^2|$ satisfies
$ d\,\delta_i \le \tfrac{1}{2}\,\gamma_i^3.$
 
\end{enumerate}
The chain is then transient.
\end{theorem}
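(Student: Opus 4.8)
The plan is to apply Mertens' transience criterion (Theorem~\ref{transient-mertens}) with the explicit, bounded, decreasing test function
\[
f(i):=\sum_{k=i}^{\infty}\gamma_k^2 .
\]
Condition (iii) makes this sum finite, so $f:\N_0\to\R_{\ge 0}$ is a well-defined bounded function, and condition (i) ($\gamma_i>0$ for $i\ge N$) forces $f$ to be strictly decreasing on $\{i\ge N\}$, hence non-constant. It then suffices to verify the two hypotheses of Theorem~\ref{transient-mertens}: the superharmonic inequality $\sum_j p_{ij}(f(j)-f(i))\le 0$ for large $i$, and the existence of a value of $f$ attained far out that undercuts every $f(i)$ with $i\le N$.

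For the superharmonic inequality I would compute the generator exactly. Writing $b_k:=\gamma_k^2$ and using $f(j)-f(i)=-\sum_{k=i}^{j-1}b_k$ for $j>i$ and $f(j)-f(i)=\sum_{k=j}^{i-1}b_k$ for $j<i$, uniform boundedness confines every index $k$ appearing to the window $|k-i|\le d$. Freezing each $b_k$ at the value $b_i$ produces the main term
\[
-\,b_i\sum_{j}p_{ij}(j-i)=-\gamma_i^2\,\gamma_i=-\gamma_i^3 ,
\]
which is strictly negative by (i). The residual (error) term equals $\sum_j p_{ij}\bigl(\pm\sum_k (b_k-b_i)\bigr)$; since $|k-i|\le d$ gives $|b_k-b_i|\le\delta_i$ by definition of the local oscillation, and $\sum_j p_{ij}|j-i|\le d$, this error is at most $d\,\delta_i$ in absolute value. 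Invoking condition (iv) then closes the estimate:
\[
\sum_{j}p_{ij}\bigl(f(j)-f(i)\bigr)\;\le\;-\gamma_i^3+d\,\delta_i\;\le\;-\tfrac12\gamma_i^3\;<\;0\qquad(i\ge N).
\]

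I expect the genuine obstacle to be exactly this error estimate: the ``freeze $b_k$ at $b_i$'' replacement is a discrete Taylor/telescoping step, and it is precisely condition (iv) that guarantees the oscillation of $\gamma^2$ across a window of width $d$ is dominated by the cubic main term $\gamma_i^3$, leaving a strictly negative remainder. It is worth noting that monotonicity (ii) can be used to sharpen this step, since for a non-increasing nonnegative drift the maximum in $\delta_i$ is attained at the window endpoints, which lets one split the generator into its upward and downward parts $U_i,D_i$ with $U_i+D_i\le d$ and recover the same bound $d\,\delta_i$ one-sidedly.

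For the positional hypothesis, observe that $f$ is non-increasing on all of $\N_0$ because its increments $f(i)-f(i+1)=\gamma_i^2$ are nonnegative, so $\min_{i\le N}f(i)=f(N)$; as $\gamma_N>0$, we have $f(N+1)<f(N)\le f(i)$ for every $i\le N$, with $N+1\ge N$. Both hypotheses of Theorem~\ref{transient-mertens} are thus satisfied, and the chain is transient. Conditions (i) and (iii) supply the structure (negative main term and boundedness of $f$), (iv) controls the error, and the monotonicity (ii) secures the strict decrease of $f$ beyond $N$ that furnishes the escape value.
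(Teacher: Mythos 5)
Your proof is correct and takes essentially the same route as the paper: the same test function $f(i)=\sum_{k\ge i}\gamma_k^2$ fed into Theorem~\ref{transient-mertens}, with the generator split into the main term $-\gamma_i^3$ and an error term bounded by $d\,\delta_i$, which condition (iv) absorbs. The only differences are cosmetic and in your favor: your exact ``freezing'' decomposition never actually needs monotonicity (ii) in the drift estimate (the paper invokes (ii) to bound the telescoped sums by $(i-j)\gamma_j^2$ and $(j-i)\gamma_{j-1}^2$ before extracting the same error $E_i$), and your verification of the positional hypothesis via $k=N+1$ with $f(N+1)<f(N)\le f(i)$ for $i\le N$ is more careful than the paper's corresponding line, which asserts strict decrease of $f$ from (iii) alone and compares values at $d$ rather than $N$.
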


\begin{proof}
Define the Lyapunov function
\[
f(i):=\sum_{k=i}^\infty \gamma_k^2, \qquad i\ge0.
\]
By the third assumption, $f$ is bounded and strictly decreasing, hence
$f(d+1) < f(d) = \inf_{0\le j\le d} f(j)$. If we manage to prove that $\Delta f(i):=\sum_{j \in \N_0} p_{ij}(f(j)-f(i)) \leq 0$, then we can apply Theorem~\ref{transient-mertens} to conclude that the chain is transient. We have
\[
\Delta f(i)=\sum_{j<i} p_{ij}\sum_{k=j}^{i-1}\gamma_k^2 \;-\; \sum_{j>i} p_{ij}\sum_{k=i}^{j-1}\gamma_k^2.
\]
Because $(\gamma_k^2)$ is non-increasing, we have for $j<i$,
$\sum_{k=j}^{i-1}\gamma_k^2 \le (i-j)\gamma_j^2$,
and for $j>i$,
$\sum_{k=i}^{j-1}\gamma_k^2 \ge (j-i)\gamma_{j-1}^2.$ Then
\begin{equation*}
\label{incrementineq}
    \begin{split}
\Delta f(i)&\leq \sum_{j<i} p_{ij}(i-j)\gamma_j^2 \;-\; \sum_{j>i} p_{ij}(j-i)\gamma_{\,j-1}^2\\
&= \gamma_i^2\Big(\sum_{j<i}(i-j)p_{ij} - \sum_{j>i}(j-i)p_{ij}\Big)+ E_i\\
&=-\gamma_i^3+E_i,
\end{split}
\end{equation*}
where 
\[
E_i \;=\; \sum_{j<i} p_{ij}(i-j)\big(\gamma_j^2-\gamma_i^2\big)
\;-\; \sum_{j>i} p_{ij}(j-i)\big(\gamma_{j-1}^2-\gamma_i^2\big).
\]
Since $|i-j|\leq d$ when the probabilities are non-zero and the probabilities sum to at most $1$, we obtain the bound
$|E_i|\le d\cdot \max_{|k-i|\le d}|\gamma_k^2-\gamma_i^2| = d\,\delta_i$. By condition~$(iv)$, we have $d\,\delta_i\le \tfrac12\gamma_i^3$, hence for all $i\geq N$,
\[
\Delta f(i) \le -\gamma_i^3 + d\,\delta_i \le -\tfrac12 \gamma_i^3 \leq  0.
\]
Therefore, the conditions of Theorem~\ref{transient-mertens} are satisfied and the chain is transient.
\end{proof}

We now show in the following corollary how to construct a family of drifts satisfying the four conditions of the theorem above.

\begin{corollary}
\label{driftfamily}
    Suppose there exist constants $C>0$ and $\alpha\in(\tfrac12,1)$ and a sequence $\varepsilon_i\to0$ such that
\begin{equation}\label{eq:asymp}
\gamma_i = C\, i^{-\alpha}\big(1+\varepsilon_i\big),\qquad i\ge 1,
\end{equation}
and $ \max_{|h|\le d} |\varepsilon_{i+h}|=O(i^{-1})$. Assume moreover that $(\gamma_i)$ is eventually positive and nonincreasing. Then the conditions of the above theorem are satisfied, and, consequently, a chain with mean drifts $(\gamma_i)_{i \geq 0}$ is transient.
\end{corollary}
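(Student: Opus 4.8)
The plan is to verify conditions (i)–(iv) of Theorem~\ref{thm:transience} one at a time, reducing each to the asymptotic form \eqref{eq:asymp}. Conditions (i) and (ii) are handed to us directly by the hypothesis that $(\gamma_i)$ is eventually positive and nonincreasing, so there is nothing to prove there beyond noting the threshold $N$. The real content lies in (iii) and (iv), both of which I would attack by substituting $\gamma_i = C\, i^{-\alpha}(1+\varepsilon_i)$ and extracting the leading power of $i$.

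For condition (iii), I would first observe that since $\varepsilon_i\to0$, there exists $N_1$ with $|\varepsilon_i|\le 1$ for $i\ge N_1$, so that $\gamma_i^2 \le 4C^2 i^{-2\alpha}$ for all large $i$. Because $\alpha>\tfrac12$ forces $2\alpha>1$, the tail $\sum_i i^{-2\alpha}$ converges as a $p$-series, and hence $\sum_k \gamma_k^2<\infty$. This is the step that pins down \emph{why} the range $\alpha\in(\tfrac12,1)$ appears: the lower endpoint $\tfrac12$ is exactly the summability threshold for $\gamma_i^2$.

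For condition (iv) I need the two-sided estimate $d\,\delta_i \le \tfrac12\gamma_i^3$, which after substitution becomes a comparison of powers: $\gamma_i^3$ is of order $i^{-3\alpha}$, while I must show $\delta_i = \max_{|k-i|\le d}|\gamma_k^2-\gamma_i^2|$ is of strictly smaller order, namely $o(i^{-3\alpha})$ up to the fixed constant $d$ and the factor $\tfrac12$. The key computation is to bound the oscillation $|\gamma_{i+h}^2-\gamma_i^2|$ for $|h|\le d$. Writing $g(x):=C^2 x^{-2\alpha}$ and using a mean-value argument, the difference of the \emph{smooth} parts $C^2|(i+h)^{-2\alpha}-i^{-2\alpha}|$ is $O(i^{-2\alpha-1})$, while the error coming from the $\varepsilon$-terms is controlled by the hypothesis $\max_{|h|\le d}|\varepsilon_{i+h}|=O(i^{-1})$, which likewise contributes at order $i^{-2\alpha-1}$. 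Thus $\delta_i = O(i^{-2\alpha-1})$, so $d\,\delta_i = O(i^{-2\alpha-1})$, whereas $\tfrac12\gamma_i^3$ is of order $i^{-3\alpha}$.

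The main obstacle — and the step deserving the most care — is the comparison $i^{-2\alpha-1}$ versus $i^{-3\alpha}$. For $d\,\delta_i\le\tfrac12\gamma_i^3$ to hold for all large $i$ we need the oscillation order to dominate (be larger negative exponent, i.e.\ decay faster than) the cube, which requires $2\alpha+1 \ge 3\alpha$, that is $\alpha\le 1$. This is precisely where the \emph{upper} endpoint $\alpha<1$ in the hypothesis is used: it guarantees $3\alpha < 2\alpha+1$, so the cube $\gamma_i^3$ decays strictly slower than $\delta_i$ and eventually dominates it by any constant factor. I would therefore make this exponent inequality explicit, choose $N$ large enough that all the implied-constant bounds and the factor $\tfrac12$ are absorbed, and conclude that (iv) holds for $i\ge N$. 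With all four conditions verified, Theorem~\ref{thm:transience} applies and the chain is transient.
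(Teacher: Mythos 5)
Your proposal is correct and follows essentially the same route as the paper's proof: the same $p$-series bound $\gamma_i^2 \le 4C^2 i^{-2\alpha}$ for condition (iii), the same decomposition of $\gamma_{i+h}^2-\gamma_i^2$ into a smooth part (handled by the Mean Value Theorem, giving $O(i^{-(2\alpha+1)})$) plus an $\varepsilon$-error term (handled by the $O(i^{-1})$ hypothesis), and the same final exponent comparison in which $\alpha<1$ makes $d\,\delta_i/\gamma_i^3 = O(i^{\alpha-1})\to 0$, absorbing all constants. Your explicit remark that strict inequality $\alpha<1$ (not just $\alpha\le 1$) is what lets the factor $\tfrac12$ and the implied constants be absorbed matches the paper's argument exactly.
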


\begin{proof}
Since, by assumption, conditions~$(i)$~and $(ii)$ are satisfied, we only need to check the last two conditions. 
\par The square summability follows because there exists $i_0 \geq 1$ such that $|\varepsilon_i|<1$, and consequently 
$\gamma_i^2 = C^2 i^{-2\alpha} (1+\varepsilon_i)^2 \leq 4C^2i^{-2\alpha}$, for all $i \geq i_0$.
Since \(2\alpha>1\), the series \(\sum_i i^{-2\alpha}\) converges, and hence \(\sum_i \gamma_i^2<\infty\).
\par To check condition~$(iv)$, fix $i$ and $h$ with \(|h|\leq d\). Using \eqref{eq:asymp} we write
\begin{equation}
\label{gammabound}
    \begin{split}
    \gamma_{i+h}^2 - \gamma_i^2&= C^2\Big[(i+h)^{-2\alpha}(1+\varepsilon_{i+h})^2 - i^{-2\alpha}(1+\varepsilon_i)^2\Big]\\
    &= C^2\big((i+h)^{-2\alpha} - i^{-2\alpha}\big) + R_{i,h},
    \end{split}
\end{equation}
where 
\[
R_{i,h} = C^2\Big[(i+h)^{-2\alpha}\big((1+\varepsilon_{i+h})^2-1\big) - i^{-2\alpha}\big((1+\varepsilon_i)^2-1\big)\Big].
\]
For  $i\geq i_0+d$, we have that $|\varepsilon_j|\le 1$ for all $j\in \{i-d,\dots,i+d\}$. Then
$|(1+\varepsilon_j)^2-1| \le 2|\varepsilon_j| + \varepsilon_j^2 \le 3|\varepsilon_j|.$
Hence, for all $|h|\le d$,
\[
|R_{i,h}| \le 3 C^2 \Big[ (i+h)^{-2\alpha} |\varepsilon_{i+h}| + i^{-2\alpha} |\varepsilon_i| \Big] \le 3 C^2 \big((i+h)^{-2\alpha} + i^{-2\alpha}\big) \max_{|i-j|\leq d} |\varepsilon_j|,
\]
Since $|h|\le d$, we have, for $i \geq 2d$, $(i+h)^{-2\alpha} \le (i-d)^{-2\alpha} \le (i/2)^{-2\alpha} = 2^{2\alpha} i^{-2\alpha}$. Therefore
\begin{equation}
\label{rih-bound}
    |R_{i,h}| \le 3 C^2 (2^{2\alpha}+1) i^{-2\alpha} \max_{|i-j|\leq d} |\varepsilon_j|.
\end{equation}

\par Now, using the Mean Value Theorem, we have
\[
\big|(i+h)^{-2\alpha} - i^{-2\alpha}\big| = 2\alpha |h|\, \xi^{-(2\alpha+1)},
\]
where $\xi \in [i, i+h]$ with $|h| \leq d$. Therefore, for $i \geq 2d$, we have

\begin{equation}
\label{ibound}
\begin{split}
    \big|(i+h)^{-2\alpha} - i^{-2\alpha}\big|& \leq 2\alpha |h|\, (i-d)^{-(2\alpha+1)}\\
    &\le 2\alpha |h|\,(i/2)^{-(2\alpha+1)}
   = \bigl(2\alpha \cdot 2^{\,2\alpha+1}\bigr)\,{|h|}{i^{-(2\alpha+1)}}.
\end{split}
\end{equation}

Therefore, for $i \geq \max\{2d, i_0+d\}$, we obtain by plugging~\eqref{rih-bound} and~\eqref{ibound} into~\eqref{gammabound},
\[
\big|\gamma_{i+h}^2 - \gamma_i^2\big| \le C^2(2\alpha \cdot 2^{\,2\alpha+1}\bigr)  {|h|}{i^{-(2\alpha+1)}} + 3 C^2 (2^{2\alpha}+1) i^{-2\alpha} \max_{|i-j|\leq d} |\varepsilon_j|.
\]
Taking the maximum over \(|h|\le d\) yields
\[
\delta_i \le C^2(2\alpha \cdot 2^{\,2\alpha+1}\bigr)  {d}\,{i^{-(2\alpha+1)}} + 3 C^2 (2^{2\alpha}+1) i^{-2\alpha} \max_{|i-j|\leq d} |\varepsilon_j|.
\]
Using that \(\max_{|u|\le d}|\varepsilon_{i+u}|=O(i^{-1})\), we have that there exists $i_1$ such that for all $i \geq i_1$, $\max_{|u|\le d}|\varepsilon_{i+u}| \leq D \,i^{-1}$ for some $D>0$ . Therefore, for $i \geq \max \{i_0+d, 2d, i_1\}$, we have 
\[
\delta_i \le A\, i^{-(2\alpha+1)}
\]
where $A:=C^2(2\alpha \cdot 2^{\,2\alpha+1}\bigr)d+3 C^2 (2^{2\alpha}+1)D$.

\par We now compare $\delta_i$ with $\gamma_i^3$. Observe that there exists $i_2$ such that $\gamma_i^3 \geq \frac{1}{2}C^3 i^{-3\alpha}$ for~all~$i \geq i_2$. Therefore, for $i \geq \max\{i_0+d, 2d, i_1, i_2\}$, we have 

\[
\frac{d\,\delta_i}{\gamma_i^3}
\leq \frac{2d A\, i^{-(2\alpha+1)}}{C^3 i^{-3\alpha}}
= \frac2{d A}{C^3}\, i^{\alpha-1} \rightarrow 0 \quad \text{as } i \rightarrow \infty.
\]
Thus,  there exists \(N\) such that for all \(i\geq N\),
\[
d\,\delta_i \le \tfrac12\,\gamma_i^3.
\]
This is precisely condition~$(iv)$. We conclude that the mean drifts satisfy all conditions of Theorem~\ref{thm:transience} and, consequently, a chain with such mean drifts is transient.

\end{proof}

We now conclude the present paper with showing a concrete example in which the above criterion can be applied, but Lamperti-type criteria fail.

\begin{example}
    Let $\frac{1}{2}<\alpha<1$ and consider the chain governed by the transition probabilities $p_{i,i-2}=\frac{1}{9}i^{-\alpha}$, $p_{i,i-1}=\frac{1}{4}i^{-\alpha}$, $p_{i,i+1}=\frac{1}{2}i^{-\alpha}$, and $p_{i,i}=1-\frac{31}{36}i^{-\alpha}$. The drift at state $i$ is then given by 
    \[
    \gamma_i=\left(\frac{1}{2}-\frac{1}{4}-\frac{2}{9}\right)i^{-\alpha}=\frac{1}{36}i^{-\alpha},
    \]
    which, using Corollary~\ref{driftfamily}, gives that the chain is transient. 
    \par However, computing $\sigma_i$ gives
    \[
    \sigma_i=\left(\frac{4}{9}+\frac{1}{4}+\frac{1}{2}\right)i^{-\alpha} = \frac{43}{36}i^{-\alpha} \, \rightarrow \, 0 \quad \text{as } i \rightarrow \infty. 
    \]
    Therefore, the second-moment drifts are not bounded away from $0$ and Lamperti's criterion can not be applied.
\end{example}

\end{document}